\documentclass[reqno,12pt]{amsart}

\usepackage{amssymb}
\usepackage[pdftex]{hyperref}
\hypersetup{colorlinks=true, linkcolor=black, citecolor=black}

\textwidth  6.00in
\textheight 8.00in

\hoffset -0.40in
\voffset -0.25in

\newcommand{\Z}{{\mathbb Z}}

\newcommand{\cA}{{\mathcal A}}

\newcommand{\alp}{\alpha}
\renewcommand{\phi}{\varphi}

\theoremstyle{plain}
\newtheorem{lemma}{Lemma}
\newtheorem{proposition}{Proposition}
\newtheorem{corollary}{Corollary}
\newtheorem{theorem}{Theorem}

\newtheorem{primetheorem}{Theorem}

\theoremstyle{remark}
\newtheorem{example}{Example}

\newcommand{\refc}[1]{~\ref{c:#1}}

\newcommand{\refl}[1]{~\ref{l:#1}}
\newcommand{\refp}[1]{~\ref{p:#1}}

\newcommand{\reft}[1]{~\ref{t:#1}}
\newcommand{\refx}[1]{~\ref{x:#1}}
\newcommand{\refb}[1]{~\cite{b:#1}}
\newcommand{\refe}[1]{\eqref{e:#1}}

\newcommand{\seq}{\subseteq}
\newcommand{\stm}{\setminus}
\newcommand{\est}{\varnothing}

\newcommand{\longc}{,\dotsc,}
\newcommand{\longp}{+\dotsb+}

\newcommand{\Zn}[1][n]{\Z_5^{#1}}

\author{Vsevolod F. Lev}
\email{seva@math.haifa.ac.il}
\address{Department of Mathematics, The University of Haifa at Oranim,
  Tivon 36006, Israel}
\title[Sum-free sets in $\Zn$]{Sum-free sets in $\Zn$}

\begin{document}
\baselineskip = 16pt

\begin{abstract}
It is well-known that for a prime $p\equiv 2\pmod 3$ and integer $n\ge 1$,
the maximum possible size of a sum-free subset of the elementary abelian
group $\Z_p^n$ is $\frac13\,(p+1)p^{n-1}$. We establish a matching
stability result in the case $p=5$: if $A\seq\Zn$ is a sum-free subset with
$|A|>\frac32\cdot5^{n-1}$, then there are a subgroup $H<\Z_5^n$ of size
$|H|=5^{n-1}$ and an element $e\notin H$ such that $A\seq(e+H)\cup(-e+H)$.
\end{abstract}

\maketitle

\section{Background and motivation.}

A subset $S$ of an abelian group is \emph{sum-free} if the equation $x+y=z$
has no solutions in the elements of $S$; that is, if $S$ is disjoint from
$2S$ where we use the standard notation $2S:=\{s_1+s_2\colon s_1,s_2\in S\}$.
The idea of a sum-free set goes back to Schur~\refb{s} who was motivated by
the modular version of the Fermat equation $x^n+y^n=z^n$. Despite this
initial motivation, sum-free sets are treated in \refb{s} as a combinatorial
object of independent interest. Originating from \refb{s}, the celebrated
\emph{Schur's theorem} (``the positive integers cannot be partitioned into
finitely many sum-free subsets'') is considered one of the origins of Ramsey
theory.

In the 1960's sum-free sets were studied under the name ``mutant sets''; see,
for instance,~\refb{ki}. The subject gained popularity when it turned out to
be related to a problem of Erd\H os. The reader is invited to check
\cite{b:gr,b:tv} for a historical account and further references.

How large can a sum-free subset of a given finite abelian group be? First
considered in 1968 by Diananda and Yap \cite{b:d,b:dy}, this basic question
did not receive a complete answer up until the year 2005 when it was
eventually resolved by Green and Ruzsa \refb{gr}.

Once the largest possible size is known, it is natural to investigate the
corresponding stability problem: what is the structure of sum-free subsets of
finite abelian groups of size close to the largest possible? In this respect,
the cyclic groups of infinite order and prime order, and elementary abelian
$p$-groups have received particular attention. Here we are concerned with the
groups of the latter type.

The case $p=2$ is of special interest due to its connections with the coding
theory and the theory of finite geometries, see \cite{b:cp,b:kl} for a
detailed explanation. Motivated by the applications in these areas, Davydov
and Tombak \refb{dt} established the structure of large sum-free subsets in
the binary settings. To state their principal result, we briefly review the
basic notions of periodicity and maximality.

The \emph{period} of a subset $A$ of an abelian group $G$ is the subgroup
$\pi(A):=\{g\in G\colon A+g=A\}\le G$; that is, $\pi(A)$ is the largest
subgroup $H\le G$ such that $A$ is a union of $H$-cosets. The set $A$ is
\emph{periodic} if $\pi(A)\ne\{0\}$ and \emph{aperiodic} otherwise. One also
says that $A$ is \emph{$H$-periodic} if $H\le\pi(A)$; that is, if $A$ is the
inverse image of a subset of the quotient group $G/H$ under the canonical
homomorphism $G\to G/H$.

A sum-free set is \emph{maximal} if it is not properly contained in another
sum-free set.

By $\Z_p^n$ we denote the elementary abelian $p$-group of rank $n$.

\begin{theorem}[{\cite[Theorem~1]{b:dt}}]\label{t:dtper} Let $n\ge 4$ and
suppose that $A\seq\Z_2^n$ is a maximal sum-free set. If $|A|>2^{n-2}+1$,
then $A$ is periodic.
\end{theorem}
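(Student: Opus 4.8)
The plan is first to put maximality into a convenient form. Working in $G=\Z_2^n$, a one-line case check shows that a sum-free set $A$ is maximal exactly when $G=A\sqcup(A+A)$: for $x\in G\stm(A\cup\{0\})$ the set $A\cup\{x\}$ fails to be sum-free precisely when $x\in A+A$ (any relation involving $x$ either has this form or degenerates to $0\in A$). Two reductions follow immediately. A maximal sum-free set spans $G$ (an outside vector could otherwise be adjoined), so $A$ cannot lie inside a hyperplane; and if $A$ lies in the non-identity coset $\complement H$ of a hyperplane $H$, then $A+A\seq H$ gives $\complement A\seq H$, i.e.\ $A\supseteq\complement H$, whence $A=\complement H$, which is $H$-periodic. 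Hence we may assume from now on that $A$ is aperiodic, spans $G$, meets both cosets of every hyperplane, and satisfies $A+A=\complement A$ (equivalently $A-A=\complement A$, since $-A=A$); the goal becomes $|A|\le 2^{n-2}+1$. Note this already forces the difference set of $A$ to be small, $|A-A|=2^n-|A|<3|A|$ whenever $|A|>2^{n-2}+1$ --- but only barely, which signals that sum-freeness must be used \emph{on top of} small doubling, not small doubling alone.

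Next I would record a \emph{local} structure fact. For any $c\in G$, translation by $c$ interchanges $A\cap(c+A)$ with $(c+A)\cap A$, so $A\cap(c+A)$ is $\langle c\rangle$-invariant; hence for $c\ne0$ it is a union of $\langle c\rangle$-cosets, and in particular $r(c):=|A\cap(c+A)|$, the number of ordered representations of $c$ as a sum of two elements of $A$, is even. It follows that $A$ is \emph{rich in lines}: it contains exactly $r(c)/2$ cosets of $\langle c\rangle$, a positive number precisely for the $2^n-|A|-1$ directions $c\in\complement A\stm\{0\}$, while the remaining $|A|-r(c)$ elements of $A$ lie in pairwise distinct $\langle c\rangle$-cosets; and $\sum_{c\ne0}r(c)=|A|(|A|-1)$, with $r$ vanishing on $A$. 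In this language the whole problem becomes a statement about line-richness: if $r(c)=|A|$ for some $c\ne0$ then $A=c+A$ is periodic and we are done, so aperiodicity amounts to $r(c)<|A|$ for every $c\ne0$, and one must show that this upper bound on the richness forces $|A|$ to be small.

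From here I would take a counterexample $A\seq\Z_2^n$ of least possible dimension; necessarily $n\ge4$, the cases $n\le3$ being vacuous. The case $n=4$ cannot be skipped --- $\Z_2^4$ really does contain an aperiodic maximal sum-free set of size exactly $5=2^{n-2}+1$, namely the set of vectors of Hamming weight $1$ or $4$ --- so it must be settled by a direct finite check. Assuming then $n\ge5$, so that the assertion holds in all smaller dimensions, the natural move is to factor out a one-dimensional subgroup $\langle c\rangle$ with $r(c)$ large and to pass to $\bar G=\Z_2^n/\langle c\rangle\cong\Z_2^{n-1}$: the image of $A\cap(c+A)$ is genuinely sum-free in $\bar G$ (a relation there would lift, using that the cosets involved are full, to a relation inside $A$), and the whole image $\bar A$ of $A$ is sum-free apart from possible triples all three of whose members are singleton $\langle c\rangle$-cosets. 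Completing a suitable sum-free subset extracted from $\bar A$ (the image of $A\cap(c+A)$, or $\bar A$ itself when it is sum-free) to a maximal sum-free subset of $\bar G$, using minimality to see that it is periodic there, and lifting a period to a coset representative would produce a nonzero element of $\pi(A)$ --- a contradiction.

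The main obstacle, as I see it, is that this obvious scheme does \emph{not} close near the threshold. Averaging $\sum_{c\ne0}r(c)=|A|(|A|-1)$ over the $2^n-|A|-1$ available directions gives only some $c$ with $r(c)\gtrsim|A|/3$, whereas making the quotient step preserve both maximality and the size bound would need $r(c)$ nearly equal to $|A|$. So the genuine argument must use the line-richness far more surgically --- locating a coset, or a small subgroup, that $A$ contains entirely or almost entirely, and factoring that out --- while simultaneously ruling out the dispersed configurations: if the lines inside $A$ were spread over too many directions with too little mutual overlap, then $A+A$ would be forced strictly larger than $\complement A$, contradicting $A+A=\complement A$. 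This is exactly where the $\Z_2^n$-structure is indispensable and cannot be replaced by off-the-shelf extremal graph theory: the Cayley graph $\operatorname{Cay}(\Z_2^n,A)$ is triangle-free exactly when $A$ is sum-free and bipartite exactly when $A$ lies in a hyperplane coset, yet the Andr\'asfai--Erd\H os--S\'os theorem only forces bipartiteness --- hence, here, periodicity --- once $|A|>\tfrac25\cdot2^n$, well above the threshold $2^{n-2}+1$. Bridging that gap requires a careful case analysis of how $A$ meets the small subgroups furnished by the line-richness, and that analysis is the crux of the proof.
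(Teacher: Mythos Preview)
The paper does not prove this theorem at all: it is quoted from Davydov and Tombak \cite{b:dt} as background, and the paper's own work is entirely about $\Zn$. So there is no ``paper's proof'' to compare against.

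That said, your proposal is not a proof either, and you say so yourself. The preliminary reductions are correct and cleanly stated: in $\Z_2^n$ a maximal sum-free set $A$ satisfies $G=A\sqcup(A+A)$, cannot lie in a proper subgroup, and if it lies in a single hyperplane coset must coincide with that coset (hence is periodic). The line-richness observation --- that $r(c)=|A\cap(c+A)|$ is even and that $A$ decomposes into $r(c)/2$ full $\langle c\rangle$-cosets plus $|A|-r(c)$ singleton cosets --- is also a genuinely useful local fact. But the inductive scheme you set up, quotienting by a direction $c$ with $r(c)$ large, does not close: averaging gives only $r(c)\gtrsim|A|/3$, the quotient image need not be sum-free, and you have not shown how to recover a period for $A$ from a period of whatever maximal sum-free set you complete to in the quotient. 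You diagnose this honestly in your final two paragraphs, but a diagnosis is not a proof; what remains is exactly the hard part.

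For orientation: the Davydov--Tombak argument is not additive-combinatorial in your sense. In $\Z_2^n$, sum-free sets not containing $0$ are precisely caps in $\mathrm{PG}(n-1,2)$ (no three points collinear), and maximal sum-free sets are complete caps; their result is proved in that projective-geometric and coding-theoretic language. If you want to push your approach through, you would need to replace the crude averaging by a structural dichotomy --- either some $r(c)$ is close to $|A|$ and you can quotient, or the representation function is spread out and $A+A$ is forced to exceed $\complement A$ --- and making that dichotomy sharp at the threshold $2^{n-2}+1$ is where the real content lies.
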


From Theorem~\reft{dtper} it is not difficult to derive a detailed structural
characterization  of large sum-free sets in $\Z_2^n$.
\begin{primetheorem}[\cite{b:dt}]\label{t:dt}
Let $n\ge 4$ and suppose that $A\seq\Z_2^n$ is sum-free. If $|A|\ge
2^{n-2}+1$, then either $A$ is contained in a nonzero coset of a proper
subgroup, or there are an integer $k\in[4,n]$, a subgroup $H\le\Z_2^n$ of
size $|H|=2^{n-k}$, and a maximal sum-free subset
$\cA\seq\Z_2^n/H\simeq\Zn[k]$ of size $|\cA|=2^{k-2}+1$ such that $A$ is
contained in the inverse image of $\cA$ under the canonical homomorphism
$\Z_2^n\to\Z_2^n/H$.
\end{primetheorem}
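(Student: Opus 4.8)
The plan is to deduce Theorem~\reft{dt} from the periodicity statement of Theorem~\reft{dtper} by quotienting out the period. First I would replace $A$ by a maximal sum-free set $B\seq\Z_2^n$ with $A\seq B$; since $|B|\ge|A|\ge 2^{n-2}+1$ and each of the two conclusions of the theorem, if it holds for $B$, passes to every subset of $B$, it is enough to establish the conclusion for $B$. If $|B|=2^{n-2}+1$ we are done at once: take $k=n$, $H=\{0\}$, and $\cA=B$, which is a maximal sum-free set of size $2^{n-2}+1$. So assume henceforth that $|B|>2^{n-2}+1$.

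By Theorem~\reft{dtper} (this is where the hypothesis $n\ge 4$ is used) the set $B$ is periodic. Put $H:=\pi(B)\ne\{0\}$, let $\phi\colon\Z_2^n\to\Z_2^n/H=:\bar G$ be the canonical homomorphism, and write $m:=\dim\bar G$, so that $m\in[1,n-1]$, $\bar G\simeq\Z_2^m$, and $|H|=2^{n-m}$. Let $\cA:=\phi(B)$, so that $B=\phi^{-1}(\cA)$. A routine check yields three facts. First, $\cA$ is sum-free: a relation $\bar x+\bar y=\bar z$ in $\cA$ lifts to $x,y\in B$ with $x+y\in\phi^{-1}(\cA)=B$, contradicting sum-freeness of $B$. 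Second, $\cA$ is maximal sum-free: a sum-free set strictly containing $\cA$ would pull back to a sum-free set strictly containing $B$. Third, $\cA$ is aperiodic: $\bar g+\cA=\cA$ forces $g+B=B$, so $g\in\pi(B)=H$ and $\bar g=0$. Finally $|\cA|=|B|/2^{n-m}>2^{m-2}+2^{m-n}$, which gives $|\cA|\ge 2^{m-2}+1$ when $m\ge 2$ and $|\cA|\ge 1$ when $m=1$.

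It remains to run through the possibilities for $m$. If $m\ge 4$, then applying Theorem~\reft{dtper} inside $\bar G\simeq\Z_2^m$ shows that an aperiodic maximal sum-free set has size at most $2^{m-2}+1$, so together with the lower bound above we get $|\cA|=2^{m-2}+1$; we are then done with $k:=m\in[4,n]$ and the subgroup $H$ of size $2^{n-m}=2^{n-k}$. If $m=1$, then (since $0$ lies in no sum-free set) $\cA=\{1\}$, and hence $B=\phi^{-1}(\{1\})$ is a nonzero coset of the index-$2$ subgroup $H$ --- the first alternative. If $m\in\{2,3\}$, a direct inspection of $\Z_2^2$ and $\Z_2^3$ shows that every maximal sum-free subset there is a nonzero coset of an index-$2$ subgroup and therefore periodic, contradicting the aperiodicity of $\cA$; so these values of $m$ cannot arise.

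The substance of the argument sits in Theorem~\reft{dtper}, which we are taking for granted. In the derivation itself the only non-formal ingredients are the finite verifications for $m\in\{2,3\}$ and the bookkeeping around the boundary value $|B|=2^{n-2}+1$. The step that I would handle with the most care is the claim that passing to the quotient by the full period $\pi(B)$ simultaneously keeps the set sum-free, keeps it maximal, and makes it aperiodic: it is precisely this that allows Theorem~\reft{dtper} to pin down $|\cA|$ exactly and so to produce the integer $k$.
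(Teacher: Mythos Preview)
Your derivation is correct and is precisely the route the paper indicates: it states Theorem~\reft{dt} as a consequence of Theorem~\reft{dtper} without spelling out the details, and your argument---pass to a maximal sum-free superset, quotient by its full period to obtain an aperiodic maximal sum-free image, then use Theorem~\reft{dtper} (together with the small-rank checks for $m\le 3$) to force $|\cA|=2^{m-2}+1$---is exactly how one fills in those details. The boundary case $|B|=2^{n-2}+1$ handled via $k=n$, $H=\{0\}$ and the finite verifications in $\Z_2^2$ and $\Z_2^3$ are all fine.
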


As an easy consequence, we have the following corollary.
\begin{corollary}[\cite{b:dt}]\label{c:dt2}
Let $n\ge 4$ and suppose that $A\seq\Z_2^n$ is sum-free. If $|A|\ge 5\cdot
2^{n-4}+1$, then $A$ is contained in a nonzero coset of a proper subgroup.
\end{corollary}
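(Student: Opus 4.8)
The plan is to deduce the corollary directly from Theorem~\reft{dt} by a size count. First I would check that the numerical hypothesis is strong enough to invoke that theorem: for every $n\ge 4$ we have
\[
  5\cdot 2^{n-4}+1 = 2^{n-2}+2^{n-4}+1 > 2^{n-2}+1,
\]
so any $A\seq\Z_2^n$ with $|A|\ge 5\cdot 2^{n-4}+1$ certainly satisfies $|A|\ge 2^{n-2}+1$, and Theorem~\reft{dt} applies.

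Theorem~\reft{dt} then offers a dichotomy. In its first alternative $A$ already lies in a nonzero coset of a proper subgroup, and there is nothing more to prove. In its second alternative $A$ is contained in the inverse image of a maximal sum-free set $\cA\seq\Z_2^n/H$ of size $|\cA|=2^{k-2}+1$, where $|H|=2^{n-k}$ and $k\in[4,n]$. The key point is that this inverse image has exactly
\[
  |H|\,|\cA| = 2^{n-k}\bigl(2^{k-2}+1\bigr) = 2^{n-2}+2^{n-k}
\]
elements, and since $k\ge 4$ this is at most $2^{n-2}+2^{n-4}=5\cdot 2^{n-4}$. Hence $|A|\le 5\cdot 2^{n-4}$, contradicting the hypothesis. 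So the second alternative is impossible, which is exactly the assertion of the corollary.

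In truth there is no substantial obstacle here: once Theorem~\reft{dt} is available, the corollary is a one-line counting argument, the only things to watch being the (trivial) inequality $5\cdot 2^{n-4}+1>2^{n-2}+1$ needed to apply the theorem, and the identity $2^{n-k}\cdot 2^{k-2}=2^{n-2}$. It is worth noting that the threshold $5\cdot 2^{n-4}+1$ is exactly one more than the maximal size $2^{n-2}+2^{n-4}$ of the inverse image arising in the second alternative (attained at $k=4$), which explains why this is the natural cutoff.
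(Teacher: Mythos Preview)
Your argument is correct and is precisely the deduction the paper has in mind: the corollary is stated as ``an easy consequence'' of Theorem~\reft{dt}, with no separate proof given, and your size count $|H|\,|\cA|=2^{n-2}+2^{n-k}\le 5\cdot 2^{n-4}$ for $k\ge 4$ is exactly the intended one-line verification.
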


Corollary~\refc{dt2} was independently obtained in~\refb{cp}.

In the ternary case, only an analog of Corollary~\refc{dt2} is known.
\begin{theorem}[\cite{b:l1}]\label{t:l3}
Let $n\ge 3$ and suppose that $A\seq\Z_3^n$ is sum-free. If $|A|\ge
5\cdot3^{n-3}+1$, then $A$ is contained in a nonzero coset of a proper
subgroup.
\end{theorem}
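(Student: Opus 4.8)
I would prove this by induction on $n$. For the base case $n=3$ one directly classifies the maximal sum-free subsets of $\Z_3^3$ of size at least $6$ (a finite, possibly computer-assisted, case analysis); the outcome is that every such set fills an entire nonzero coset of an index-$3$ subgroup, which settles $n=3$. For the inductive step one may assume that $A$ is maximal sum-free, since enlarging $A$ to a maximal sum-free set only increases its size while the conclusion passes to subsets. It is also convenient to restate the target: $A$ is contained in a nonzero coset of a proper subgroup if and only if some nonzero homomorphism $\phi\colon\Z_3^n\to\Z_3$ takes a single nonzero value on $A$. Fixing such a $\phi$, setting $K=\ker\phi$ and choosing $e$ with $\phi(e)=1$, decompose $A=B_0\cup(e+B_1)\cup(2e+B_2)$ with $B_i\seq K$; sum-freeness of $A$ then amounts to the six conditions that $B_0$ is sum-free, $B_0+B_1$ misses $B_1$, $B_0+B_2$ misses $B_2$, $B_1+B_1$ misses $B_2$, $B_2+B_2$ misses $B_1$, and $B_1+B_2$ misses $B_0$, and the goal is to locate $\phi$ for which two of the $B_i$ are empty.

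The core of the argument --- and the step I expect to be the main obstacle --- is a ternary counterpart of Davydov--Tombak's Theorem\reft{dtper}: a maximal sum-free subset of $\Z_3^n$ of size greater than $5\cdot3^{n-3}$ is periodic. Here maximality enters through the covering $\Z_3^n=A\cup(A+A)\cup(A-A)\cup D$ with $D=\{g\in\Z_3^n\colon 2g\in A\}$, which together with the sum-free identity $A\cap(A+A)=\est$ yields $|A+A|\le 3^n-|A|$; feeding this into Kneser's theorem and combining it with the six coset constraints for a well-chosen $\phi$ --- for instance one along which $|B_0|$ is largest, so that the induction hypothesis already describes the sum-free set $B_0\seq K\cong\Z_3^{n-1}$ --- one aims to force the alternative: either some $B_i$ is empty, which is the desired conclusion, or the stabiliser $\pi(A+A)$, and hence $\pi(A)$, is nontrivial. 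This is the technically demanding part: a hyperplane in $\Z_3^n$ carries three cosets whose sum-free interactions must all be reconciled simultaneously (rather than two, as in the binary case), and the extremal configurations --- the full preimages in $\Z_3^n$ of the aperiodic size-$5$ maximal sum-free set in $\Z_3^3$ --- sit precisely at the threshold $5\cdot3^{n-3}$, so the estimates cannot afford to be lossy.

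Granting periodicity, the induction closes immediately. If $A$ is periodic, choose a subgroup $H\le\pi(A)$ with $|H|=3$; then $A$ is the full preimage under $\Z_3^n\to\Z_3^n/H\cong\Z_3^{n-1}$ of a sum-free set $\cA$ with $|\cA|=|A|/3\ge 5\cdot3^{n-4}+1$. By the induction hypothesis $\cA$ lies in a nonzero coset of a proper subgroup of $\Z_3^{n-1}$, and pulling that coset back through the quotient map places $A$ in a nonzero coset of a proper subgroup of $\Z_3^n$. One caveat: periodicity is needed with the threshold $5\cdot3^{n-3}$ exactly, so the preceding step must be carried out sharply; were periodicity available only above some larger bound, the sum-free sets of intermediate size would have to be handled separately, by identifying them as lifts of the small maximal configurations in $\Z_3^3$ --- essentially the additional content of the full structure theorem, a ternary analogue of Theorem\reft{dt}.
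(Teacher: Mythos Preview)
The present paper does not prove Theorem~\reft{l3}; it is quoted from~\refb{l1} as background for the $p=5$ result, so there is no proof here against which to compare your attempt.

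On its own terms, your outline has a genuine gap. The architecture --- enlarge to a maximal sum-free superset, establish periodicity above $5\cdot3^{n-3}$, then descend by induction --- is sound, and the descent step is handled correctly (periodicity forces $3\mid|A|$, hence $|A|\ge 5\cdot3^{n-3}+3$, and the quotient set meets the threshold in $\Z_3^{n-1}$). But the periodicity claim is the entire substance of the theorem, and you do not prove it: you list ingredients --- Kneser's theorem, the maximal-covering identity $\Z_3^n=A\cup(A+A)\cup(A-A)\cup D$, the six sum-free constraints along a hyperplane, induction applied to $B_0$ --- without showing how they combine to force $\pi(A)\ne\{0\}$. Two concrete symptoms: the inequality $|A+A|\le 3^n-|A|$ that you extract uses only sum-freeness, not maximality, so the covering identity is doing no visible work in your sketch; and while applying the inductive hypothesis to a large slice $B_0$ may well yield structural information, you give no mechanism by which that information propagates back to $A$ to produce a nontrivial period. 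You yourself flag this step as ``the main obstacle'' and ``technically demanding,'' which is accurate --- but that means what you have written is a plan with the decisive lemma left open, not a proof.
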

As shown in \cite{b:l1}, the bound $5\cdot3^{n-3}+1$ is sharp.

In this note, we study the first open case $p=5$ proving the following
result.
\begin{theorem}\label{t:main}
Let $n\ge 1$ and suppose that $A\seq\Zn$ is sum-free. If
$|A|>\frac32\cdot5^{n-1}$, then there are a proper subgroup $H<\Zn$ and an
element $e\notin H$ such that $A\seq(e+H)\cup(-e+H)$.
\end{theorem}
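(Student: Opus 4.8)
The plan is to argue by induction on $n$. The base case $n=1$ is immediate: a sum-free subset of $\Z_5$ of size $\ge2$ must be $\{1,4\}$ or $\{2,3\}$, both of the form $\{e,-e\}$, so the conclusion holds with $H=\{0\}$. For $n\ge2$ one may assume $A$ is a maximal sum-free set, since enlarging $A$ preserves the hypothesis $|A|>\frac32\cdot5^{n-1}$ and the two-coset conclusion for a larger set restricts to $A$. The decisive invariant is $H:=\pi(A+A)$. Since $A$ is sum-free it is disjoint from $A+A$, which is a union of $H$-cosets; consequently the image $\bar A$ of $A$ under $\Zn\to\Zn/H\cong\Zn[k]$, where $k:=n-\dim H$, is disjoint from $\bar A+\bar A$ and hence sum-free, and $|A|\le|A+H|=5^{n-k}\,|\bar A|$ gives $|\bar A|>\frac32\cdot5^{k-1}$. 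If $H\ne\{0\}$ then $k<n$, and the inductive hypothesis provides a hyperplane $L<\Zn[k]$ and $f\notin L$ with $\bar A\seq(f+L)\cup(-f+L)$; taking $H'<\Zn$ to be the full preimage of $L$ (so $|H'|=5^{n-1}$) and $e$ to be any lift of $f$ yields $A\seq(e+H')\cup(-e+H')$. So we are left with the case that $A+A$ is aperiodic.

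In that case Kneser's theorem gives $|A+A|\ge2|A|-1$, while disjointness of $A$ and $A+A$ gives $|A+A|\le5^n-|A|$; together these force $\frac32\cdot5^{n-1}<|A|\le\frac13(5^n+1)$, so $A$ has density just above $\frac3{10}$ and $|A+A|$ exceeds $2|A|$ by at most $\frac12\cdot5^{n-1}$ — a near-critical configuration for Kneser's bound. The aim now is to exhibit the hyperplane directly. Observe that $A$ is also disjoint from $A-A$: indeed $0\notin A$, and $a-a'\in A$ with $a\ne a'$ would make $a'+(a-a')=a$ a forbidden equation. Hence, with $B:=A\cup(-A)$, we have $A+B=(A+A)\cup(A-A)\seq\Zn\stm A$, so $|A+B|\le5^n-|A|$, whereas Kneser's theorem gives $|A+B|\ge|A|+|B|-|\pi(A+B)|$; therefore $|\pi(A+B)|\ge2|A|+|B|-5^n$. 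When $|A\cap(-A)|$ is small, so that $|B|$ is close to $2|A|$, the right-hand side exceeds $5^{n-2}$, and since $A+B\ne\Zn$ this forces $[\Zn:\pi(A+B)]=5$. Taking a surjection $\psi\colon\Zn\to\Z_5$ with kernel $\pi(A+B)$, the $\pi(A+B)$-periodic set $A+B$ is a full union of $\ker\psi$-cosets, so from $A\seq\Zn\stm(A+B)$ we get that $C:=\psi(A)$ is disjoint from $\psi(A+B)\supseteq C+C$; thus $C$ is sum-free in $\Z_5$, and $|A|>5^{n-1}$ forces $|C|=2$, whence $C=\{e,-e\}$ and $A\seq\psi^{-1}(\{e,-e\})$ as required.

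The main obstacle is the remaining part of the aperiodic case, in which $A\cap(-A)$ is of intermediate size and the estimate above does not by itself force an index-$5$ period. Here one needs a finer tool — most plausibly a stability version of Kneser's theorem, describing the pairs $(X,Y)$ for which $|X+Y|$ only slightly exceeds $|X+\pi(X+Y)|+|Y+\pi(X+Y)|-|\pi(X+Y)|$ — applied to $A+B$, so as to conclude that in this regime either $A+A$ is in fact periodic (contradicting the case hypothesis) or $A$ already has the two-coset shape. An equivalent, more computational route is to study, for a well-chosen surjection $\psi\colon\Zn\to\Z_5$, the profile of fibre sizes $a_c:=|A\cap\psi^{-1}(c)|$, $c\in\Z_5$: sum-freeness forces $a_c+a_d+a_{c+d}\le6\cdot5^{n-2}$ whenever $a_{c+d}\ne0$ — apply Kneser inside the fibre $\psi^{-1}(c+d)\cong\Zn[n-1]$ and use that $(A\cap\psi^{-1}(c))+(A\cap\psi^{-1}(d))$ misses $A\cap\psi^{-1}(c+d)$ — and for the correct $\psi$ this, together with $\sum_c a_c>\frac32\cdot5^{n-1}$, should pin the support of the profile down to a symmetric pair $\{e,-e\}$. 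Carrying this out, and in particular identifying the correct $\psi$ in the intermediate regime, is where the real work lies; the comparatively generous threshold $\frac32\cdot5^{n-1}$ — rather than the conjecturally sharp $\frac65\cdot5^{n-1}$ — is what leaves room in these estimates. The remaining small case $n=2$, where the periodic reduction leaves only $|A|=8$, can be checked directly if it is not already covered by the general aperiodic argument.
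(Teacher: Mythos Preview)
Your proposal is not a complete proof: you explicitly leave the central case open. The inductive reduction via $H=\pi(A+A)$ is correct and tidy, and the sub-case where $|A\cap(-A)|$ is small enough to force $[\Zn:\pi(A+B)]=5$ (with $B=A\cup(-A)$) is handled cleanly. But the ``intermediate regime'' --- aperiodic $A+A$ with $|A\cap(-A)|$ large --- is precisely where the content of the theorem lies, and you say only that ``one needs a finer tool'' and that carrying this out ``is where the real work lies''. That is an honest description of a gap, not a proof. The vague invocation of a Kneser stability theorem is not enough: no such off-the-shelf result gives the two-coset structure here, and your alternative fibre-profile sketch (the inequalities $a_c+a_d+a_{c+d}\le6\cdot5^{n-2}$) presupposes that a good surjection $\psi$ has already been found, which is exactly the missing step.

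The paper does not proceed by induction and does not try to squeeze the structure out of Kneser stability. Instead it uses Fourier analysis to \emph{produce} the hyperplane: from $\sum_\chi|\hat1_A(\chi)|^2\hat1_A(\chi)=0$ one extracts a nonprincipal character $\chi$ with $\Re(\hat1_A(\chi))\le-\alpha^2/(1-\alpha)$, and then sets $H=\ker\chi$. This is the device that replaces your unspecified ``correct $\psi$''. The fibre-profile inequalities you mention do appear in the paper, but in a sharper form than your single Kneser bound: one needs in addition that no $H$-coset can carry more than $\frac12|H|$ elements of $A$ without forcing $A$ into at most three cosets, and --- via a separate lemma on triples $(A,B,C)$ with $(A+B)\cap C=\est$ --- that at most one coset can carry more than $\frac25|H|$ elements. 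These constraints on the profile $(\alpha_0,\ldots,\alpha_4)$ are then played off against the Fourier inequality to reach a contradiction. Your Kneser-only bound $a_c+a_d+a_{c+d}\le6\cdot5^{n-2}$ is the content of one case of the paper's Proposition~\refp{1}, but by itself it is not strong enough to close the argument, which is why the paper needs the additional propositions and the character-sum input.
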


There are no reasons to believe that the assumption $|A|>\frac32\cdot5^{n-1}$
of Theorem~\reft{main} is sharp. On the other hand, it cannot be relaxed to
$|A|>5^{n-1}$.

\begin{example}\label{x:Ex1}
Suppose that $n\ge 3$ is an integer, and that $H<\Zn$ is a subgroup of index
$5$. Fix arbitrarily an element $e\notin H$ and a subset $S\seq H$ with
$S\cap(-S)=\est$ and $S\cup(-S)=H\stm\{0\}$, and let
$A:=(e+S)\cup\{2e,-2e\}\cup(-e-S)$. A straightforward verification shows that
$A$ is sum-free. Suppose now that $A$ is contained in a union of two cosets
of a subgroup $F<\Zn$. Since $A$ meets four $H$-cosets and just two
$F$-cosets, we have $F\ne H$. Furthermore, one of these $F$-cosets contains
at least half the elements of the set $e+S$. The intersection of this
$F$-coset with the coset $e+H$ has therefore size at least $\frac12
|S|=(|H|-1)/4>|H|/5$ while, on the other hand, the intersection of an
$H$-coset with an $F$-coset is a coset of a proper subgroup of $H$, and as
such, has size at most $|H|/5$, a contradiction showing that $A$ is not
contained in a union of two cosets of a proper subgroup.
\end{example}

We now turn to the proof of Theorem~\reft{main}.

\section{Proof of Theorem~\reft{main}}\label{s:proof}

Our argument is self-contained except that we need the following classical
result of Kneser (but see ~\cite[Theorem~6.1]{b:g} for our present
formulation).
\begin{theorem}[Kneser \cite{b:kn1,b:kn2}]\label{t:Kneser}
If $A_1\longc A_k$ are finite, nonempty subsets of an abelian group, then
letting $H:=\pi(A_1\longp A_k)$ we have
  $$ |A_1\longp A_k| \ge |A_1+H| \longp |A_k+H| - (k-1)|H|. $$
\end{theorem}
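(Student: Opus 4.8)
The plan is to reduce the statement to the case of two summands, which carries all of the difficulty, and to treat that case by a transform induction. Throughout, $G$ denotes the ambient abelian group, $\pi(\cdot)$ the period of a set, and $\phi_H$ the canonical map $G\to G/H$. Two elementary facts drive the reductions. First, $\pi(X)\seq\pi(X+Y)$ for nonempty finite $X,Y\seq G$, since $X+h=X$ forces $(X+Y)+h=X+Y$. Second, writing $S:=A_1\longp A_k$ and $H:=\pi(S)$, the image $\phi_H(S)=\phi_H(A_1)\longp\phi_H(A_k)$ is aperiodic in $G/H$ (any nonzero period of $\phi_H(S)$ lifts to a period of $S$ strictly containing $H$). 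As $S$ is $H$-periodic, $|S|=|H|\,|\phi_H(S)|$ and $|A_i+H|=|H|\,|\phi_H(A_i)|$, so the claimed inequality for $A_1\longc A_k$ is exactly $|H|$ times the analogous inequality for $\phi_H(A_1)\longc\phi_H(A_k)$ in $G/H$. Hence it suffices to prove the aperiodic case: if $\pi(A_1\longp A_k)=\{0\}$, then $|A_1\longp A_k|\ge|A_1|\longp|A_k|-(k-1)$.

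Next I would remove the dependence on $k$. In the aperiodic case each prefix sum $S':=A_1\longp A_{k-1}$ satisfies $\pi(S')\seq\pi(S'+A_k)=\pi(S)=\{0\}$ by the first fact, so it is aperiodic as well. Induction on $k$ then applies: the inductive hypothesis gives $|S'|\ge|A_1|\longp|A_{k-1}|-(k-2)$, and one further application of the two-summand aperiodic bound $|S'+A_k|\ge|S'|+|A_k|-1$ yields the claim. The base case $k=2$ is precisely the two-summand inequality $|A+B|\ge|A|+|B|-1$ under $\pi(A+B)=\{0\}$, established below.

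Everything thus rests on the two-summand case, which I would prove in the flexible form $|A+B|\ge|A|+|B|-|\pi(A+B)|$ (the aperiodic bound being the case $\pi(A+B)=\{0\}$) by induction on $|B|$, with base case $|B|=1$ immediate. The tool is Dyson's $e$-transform: for $e\in G$ put $A_e:=A\cup(B+e)$ and $B_e:=B\cap(A-e)$, so that $A_e+B_e\seq A+B$ and $|A_e|+|B_e|=|A|+|B|$. If no $e$ gives $0<|B_e|<|B|$, then $B+e\seq A$ for every $e\in A-B$, forcing $A+(B-B)=A$; then $A$ is $K$-periodic and $B$ lies in one coset of $K:=\langle B-B\rangle\seq\pi(A)\seq\pi(A+B)$, whence $|A+B|=|A|\ge|A|+|B|-|K|\ge|A|+|B|-|\pi(A+B)|$. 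Otherwise pick $e$ with $0<|B_e|<|B|$ and apply the inductive hypothesis to $(A_e,B_e)$: writing $H_e:=\pi(A_e+B_e)$, we obtain $|A+B|\ge|A_e+B_e|\ge|A|+|B|-|H_e|$.

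The main obstacle is this final inequality: it closes the induction only when $|H_e|\le|\pi(A+B)|$, yet passing to the subset $A_e+B_e$ may \emph{enlarge} the period. Ruling out persistent enlargement is the heart of Kneser's argument. The point is that the transforms cannot all increase the period: were $|H_e|>|\pi(A+B)|$ for every admissible $e$, the subgroups $H_e$, taken together with $\pi(A+B)$, would generate a period of $A+B$ properly containing $\pi(A+B)$ — contradicting that $\pi(A+B)$ is the full period. Hence some admissible $e$ has $|H_e|\le|\pi(A+B)|$, closing the induction. Making this amalgamation precise — controlling $(A+B)\stm(A_e+B_e)$ and checking that the combined periodicity survives — is the technical core, and where I expect essentially all the effort to lie; by contrast the reductions above are routine once the two-summand bound is in hand.
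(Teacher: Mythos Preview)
The paper does not prove Kneser's theorem at all: it is quoted as a classical external result with references to Kneser's original papers and to Grynkiewicz's book, and is then used as a black box. There is therefore no in-paper argument to compare your proposal against.

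On the proposal itself: your reductions are correct and standard. Quotienting by $H=\pi(A_1\longp A_k)$ to pass to the aperiodic case, and then peeling off summands using $\pi(S')\seq\pi(S'+A_k)$ to reduce to $k=2$, both work exactly as you say. The base case of the $e$-transform induction (no admissible $e$, forcing $A+(B-B)=A$) is also handled correctly.

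The genuine gap is precisely where you flag it, and your heuristic for closing it does not work as stated. You assert that if every admissible transform satisfies $|H_e|>|\pi(A+B)|$, then ``the subgroups $H_e$, taken together with $\pi(A+B)$, would generate a period of $A+B$ properly containing $\pi(A+B)$.'' But $H_e$ is the period of the \emph{subset} $A_e+B_e\seq A+B$, and nothing you have written gives any reason an element of $H_e$ should stabilise $A+B$ itself; periods of subsets need bear no relation to periods of the ambient set. So the amalgamation you gesture at has no footing. Standard proofs close this gap by different means --- typically either by strengthening the inductive statement so that it survives the transform, or by a separate covering argument comparing $A+B$ with $(A_e+B_e)+H_e$ and bounding the discrepancy. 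As written, your outline reaches the doorstep of Kneser's theorem but does not cross it: essentially all of the theorem's content lives in the step you leave open.
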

Theorem~\reft{Kneser} is referred to below as \emph{Kneser's theorem}.

We start with a series of ``general'' claims. At this stage, it is not
assumed that $A$ is a sum-free set satisfying the assumptions of
Theorem~\reft{main}.

\begin{lemma}\label{l:union}
Let $n\ge 1$ be an integer and suppose that $A\seq\Zn$ is sum-free. If
$|A|>\frac32\cdot5^{n-1}$ and $A$ is contained in a union of two cosets of a
proper subgroup $H<\Zn$, then there is an element $e\notin H$ such that
$A\seq(e+H)\cup(-e+H)$.
\end{lemma}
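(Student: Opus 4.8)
The plan is to use the size hypothesis to pin the subgroup down to index~$5$, and then to rule out every pair of cosets other than a ``$\pm$'' pair by a short application of Kneser's theorem.

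First I would observe that $H$ must be maximal. Since $A$ lies in two cosets of $H$ we have $|A|\le 2|H|$, so $2|H|>\frac32\cdot5^{n-1}$, i.e.\ $|H|>\frac34\cdot5^{n-1}$; as $|H|$ is a power of $5$ not exceeding $5^{n-1}$, this forces $|H|=5^{n-1}$ and $Q:=\Zn/H\simeq\Z_5$. In particular $|A|>|H|$, so $A$ is not contained in a single coset; writing $\phi_H\colon\Zn\to Q$ for the quotient map, $\phi_H(A)=\{u,v\}$ with $u\ne v$. Put $A_u:=A\cap\phi_H^{-1}(u)$ and $A_v:=A\cap\phi_H^{-1}(v)$; since $A=A_u\cup A_v$ is a disjoint union, $|A_u|+|A_v|=|A|>\frac32|H|$, and as $|A_u|,|A_v|\le|H|$ I get $|A_u|>\frac12|H|$ and $|A_v|>\frac12|H|$. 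The goal is to show $v=-u$; granting this, any $e\in\phi_H^{-1}(u)$ lies outside $H$ (as $u\ne -u=v$ forces $u\ne 0$) and satisfies $A\seq(e+H)\cup(-e+H)$.

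Next I would assume $v\ne-u$ and aim for a contradiction. An elementary check in $\Z_5$ shows that the pairs $\{u,v\}$ with $v=-u$ are exactly those for which $2u\notin\{u,v\}$ \emph{and} $2v\notin\{u,v\}$; hence our assumption gives, after possibly interchanging the roles of $u$ and $v$, that $2u=w$ for some $w\in\{u,v\}$. Fix $a_0\in\phi_H^{-1}(u)$, so that $2a_0\in\phi_H^{-1}(w)$, and set $B:=-a_0+A_u\seq H$ and $C:=-2a_0+A_w\seq H$. Then $A_u+A_u=2a_0+(B+B)$ and $A_w=2a_0+C$, while $|B|=|A_u|>\frac12|H|$ and $|C|=|A_w|>\frac12|H|$. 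Since $A$ is sum-free, $A_w\cap(A_u+A_u)\seq A\cap 2A=\est$, so $C\cap(B+B)=\est$ inside $H$.

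Finally I would invoke Kneser's theorem for the sumset $B+B$. Disjointness of $C$ and $B+B$ in $H$ gives $|B+B|\le|H|-|C|<\frac12|H|$. On the other hand, setting $K:=\pi(B+B)\le H$, Kneser's theorem yields $|B+B|\ge 2|B+K|-|K|\ge 2|B|-|K|$, so $|K|\ge 2|B|-|B+B|>|H|-\frac12|H|=\frac12|H|$. Since $|H|/|K|$ is a power of $5$, this forces $K=H$; but then the nonempty set $B+B\seq H$ is a union of $H$-cosets, so $B+B=H$ and $|B+B|=|H|$, contradicting $|B+B|<\frac12|H|$. Hence $v=-u$, which finishes the argument.

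The crux, I expect, is the reduction in the third paragraph: recognising that among all pairs of $H$-cosets the only configuration besides $\{e+H,-e+H\}$ that could carry a sum-free set this large is the ``doubling'' one $2u\in\{u,v\}$, and that this configuration is destroyed by the one-line Kneser estimate of the last paragraph. No degenerate cases should need separate treatment: $n=1$ is covered verbatim (then $|H|=1$), and the subcase $u=0$ is just the instance $B=C$ of the same computation — it says precisely that a sum-free subset of $\Z_5^{n-1}$ has fewer than $\frac12\cdot5^{n-1}$ elements.
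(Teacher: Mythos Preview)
Your argument is correct, and its skeleton matches the paper's: force $|H|=5^{n-1}$, deduce that each of the two pieces of $A$ occupies more than half of its coset, and use this to see that the relevant sumsets fill all of $H$, which is incompatible with sum-freeness unless the two cosets are opposite.

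The execution differs in two ways worth noting. First, you invoke Kneser's theorem to conclude $B+B=H$ from $|B|>\tfrac12|H|$, but this is overkill: the paper simply uses pigeonhole (for any $h\in H$ the sets $B$ and $h-B$ together exceed $|H|$ in size, so they meet). Second, rather than assuming $v\ne-u$ and classifying the non-$\pm$ pairs in $\Z_5$ to reduce to the single constraint $2u\in\{u,v\}$, the paper computes $2A$ in full: since $2A_1=2A_2=A_1+A_2=H$, one has $2A=(2e_1+H)\cup(e_1+e_2+H)\cup(2e_2+H)$, and sum-freeness forces all three of these cosets to avoid both $e_1+H$ and $e_2+H$, which in $\Z_5$ is possible only when $e_2+H=-e_1+H$. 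This direct computation replaces your case analysis and your contradiction step in one stroke, and avoids the appeal to Kneser entirely.
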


\begin{proof}
Since $2|H|\ge|A|>\frac32\cdot5^{n-1}$, we have $|H|=5^{n-1}$. Suppose that
$A=(e_1+A_1)\cup(e_2+A_2)$, where $A_1,A_2$ are contained in $H$, and
$e_1,e_2\in\Zn$ lie in distinct $H$-cosets. From $|A|>\frac32\cdot5^{n-1}$ we
get $|A_1|+|A_2|=|A|>\frac32\,|H|$. Therefore
$\min\{|A_1|,|A_2|\}>\frac12\,|H|$, and by the pigeonhole principle,
$2A_1=2A_2=A_1+A_2=H$. It follows that
$2A=(2e_1+H)\cup(e_1+e_2+H)\cup(2e_2+H)$. Since $A$ is sum-free, each of the
three cosets in the right-hand side is distinct from each of the cosets
$e_1+H$ and $e_2+H$, which is possible only if $e_2+H=-e_1+H\ne H$.
\end{proof}

By Lemma~\refl{union}, to prove Theorem~\reft{main} it suffices to show that
any sum-free set in $\Zn$ of size larger than $\frac32\cdot5^{n-1}$ is
contained in a union of two cosets of a proper subgroup.

\begin{proposition}\label{p:1}
Let $n\ge 1$ be an integer and suppose that $A\seq\Zn$ is sum-free. If
$|A|>\frac32\cdot5^{n-1}$, then $A$ cannot have non-empty intersections with
exactly three cosets of a maximal proper subgroup of $\Zn$.
\end{proposition}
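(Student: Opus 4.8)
The plan is to argue by contradiction. Assume that $A$ meets exactly three cosets of a maximal proper subgroup $H<\Zn$; then $|H|=5^{n-1}$ and $\Zn/H\cong\Z_5$. Fix $e\notin H$ and, for $i\in\Z_5$, set $A_i:=\{x\in H\colon ie+x\in A\}\seq H$, so that $A\cap(ie+H)=ie+A_i$ and $|A|=\sum_{i\in\Z_5}|A_i|$. Writing $T:=\{i\in\Z_5\colon A_i\ne\est\}$, we have $|T|=3$ by assumption. The hypothesis that $A$ is sum-free translates, coset by coset, into the conditions $(A_i+A_j)\cap A_{i+j}=\est$ for all $i,j\in\Z_5$ (indices modulo $5$): a solution $x+y=z$ with $x\in A_i$, $y\in A_j$, $z\in A_{i+j}$ would yield $(ie+x)+(je+y)=(i+j)e+z$, an element of $2A\cap A$.

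The only tool needed beyond this bookkeeping is the pigeonhole principle inside $H$: if $X,Y\seq H$ satisfy $|X|+|Y|>|H|$, then $X+Y=H$. Consequently, whenever $i+j\in T$ — so that $A_{i+j}$ is nonempty while $A_i+A_j\ne H$ by the previous paragraph — we must have $|A_i|+|A_j|\le|H|$; in particular $2|A_i|\le|H|$ whenever $2i\in T$.

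It remains to exploit the structure of three-element subsets of $\Z_5$. The relevant elementary fact is that every three-element subset $T\seq\Z_5$ can be labelled $T=\{u,v,w\}$ so that $2u\in T$ and $v+w\in T$. This can be checked directly for all ten three-element subsets, or, more systematically, one may first apply the automorphism $x\mapsto\lambda x$ of $\Zn$ with $\lambda\in\Z_5\stm\{0\}$ — which fixes $H$, preserves sum-freeness, and acts on $\Zn/H$ as multiplication by $\lambda$, hence replaces $T$ by $\lambda T$ — to reduce to the three orbit representatives $\{0,1,2\}$, $\{0,1,4\}$, $\{1,2,3\}$, for which one takes $u=1$, $u=0$, $u=3$ respectively. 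Given such a labelling, combining $2|A_u|\le|H|$ with $|A_v|+|A_w|\le|H|$ gives $|A|=|A_u|+|A_v|+|A_w|\le\frac12|H|+|H|=\frac32\cdot5^{n-1}$, contradicting $|A|>\frac32\cdot5^{n-1}$.

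I do not foresee a real obstacle: no appeal to Kneser's theorem is required, and once the coset decomposition is set up the estimate is a single line. The one point that needs attention is the selection of the distinguished element $u$ in the combinatorial fact — it is not the case that every $u\in T$ with $2u\in T$ works (for instance, in $T=\{0,1,2\}$ the choice $u=0$ leaves $v+w=1+2=3\notin T$) — which is why passing to the three orbit representatives is a clean way to carry out the verification.
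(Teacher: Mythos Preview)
Your argument is correct, and it takes a genuinely lighter route than the paper's. Both proofs reduce, via the scalar action on $\Zn/H\cong\Z_5$, to the same three orbit representatives $T=\{0,1,4\}$, $\{1,2,3\}$, $\{0,1,2\}$ (the paper phrases this as replacing $e$ by $2e,3e,4e$). The difference lies in how each case is handled. The paper invokes Kneser's theorem: for $T=\{0,1,4\}$, from $(A_1+A_4)\cap A_0=\est$ and $|A_1+A_4|\ge|A_1|+|A_4|-|F|$ with $F=\pi(A_1+A_4)\le H$ it deduces $|A|\le|H|+|F|\le\frac65|H|$; the case $T=\{0,1,2\}$ needs two such applications and an averaging step. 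You instead use only the pigeonhole principle (if $|X|+|Y|>|H|$ then $X+Y=H$) together with the uniform combinatorial observation that every three-element $T\seq\Z_5$ admits a labelling $\{u,v,w\}$ with $2u\in T$ and $v+w\in T$, yielding $|A|\le\frac12|H|+|H|=\frac32|H|$ in a single line. Your bound $\frac32|H|$ is weaker than the paper's $\frac65|H|$, but it meets the hypothesis exactly, so nothing is lost for this proposition; in return your proof is self-contained (no Kneser) and treats all three cases at once rather than separately. Your closing caveat about the choice of $u$ is well placed and the explicit verification on the three representatives suffices.
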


\begin{proof}
The case $n=1$ is immediate. Assuming that $n\ge 2$, $A\seq\Zn$ is sum-free,
and $H<\Zn$ is a maximal proper subgroup such that $A$ intersects
non-trivially exactly three $H$-cosets, we obtain a contradiction.

Fix an element $e\in\Zn\stm H$, and for each $i\in[0,4]$ let $A_i:=(A-ie)\cap
H$; thus, $A=A_0\cup(e+A_1)\cup(2e+A_2)\cup(3e+A_3)\cup(4e+A_4)$ with exactly
three of the sets $A_i$ non-empty. Considering the actions of the
automorphisms of $\Zn[]$ on its two-element subsets (equivalently, passing
from $e$ to $2e,3e$, or $4e$, if necessary), we further assume that one of
the following holds:
\begin{itemize}
\item[(i)]   $A_2=A_3=\est$;
\item[(ii)]  $A_0=A_4=\est$;
\item[(iii)] $A_3=A_4=\est$.
\end{itemize}
We consider these three cases separately.

\paragraph{Case (i): $A_2=A_3=\est$}
In this case, $A=A_0\cup(e+A_1)\cup(4e+A_4)$, and since $A$ is sum-free, we
have $(A_1+A_4)\cap A_0=\est$. It follows that $|A_0|+|A_1+A_4|\le |H|$.
Consequently, letting $F:=\pi(A_1+A_4)$, we have $|H|\ge
|A_0|+|A_1|+|A_4|-|F|=|A|-|F|$ by Kneser's theorem. Observing that
$|F|\le\frac15|H|=5^{n-2}$, we conclude that
  $$ |A| \le |H|+|F| \le \frac65|H|=6\cdot 5^{n-2} < \frac32\cdot 5^{n-1}, $$
a contradiction.

\smallskip
\paragraph{Case (ii): $A_0=A_4=\est$}
In this case $A=(e+A_1)\cup(2e+A_2)\cup(3e+A_3)$ with $(A_1+A_2)\cap
A_3=\est$, and the proof can be completed as in Case (i).

\smallskip
\paragraph{Case (iii): $A_3=A_4=\est$}
In this case from $(A_0+A_1)\cap A_1=\est$, letting $F:=\pi(A_0+A_1)$, by
Kneser's theorem we get
  $$ |H| \ge |A_0+A_1|+|A_1| \ge |A_0|+2|A_1| - |F| $$
whence, in view of $|F|\le\frac15|H|$,
\begin{equation}\label{e:nov5a}
  2|A_1|+|A_0| \le \frac65\,|H|.
\end{equation}
Similarly, from $(A_0+A_2)\cap A_2=\est$ we get
\begin{equation}\label{e:nov5b}
  2|A_2|+|A_0| \le \frac65\,|H|.
\end{equation}
Averaging~\refe{nov5a} and~\refe{nov5b} we obtain
$|A|\le\frac65|H|<\frac32\cdot5^{n-1}$, a contradiction.
\end{proof}

\begin{proposition}\label{p:2}
Let $n\ge 1$ be an integer and suppose that $A\seq\Zn$ is sum-free, and that
$H<\Zn$ is a maximal proper subgroup. If there is an $H$-coset with more than
half of its elements contained in $A$, then $A$ has non-empty intersections
with at most three $H$-cosets.
\end{proposition}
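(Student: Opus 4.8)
The plan is to set up coordinates along the quotient $\Zn/H$ exactly as in the proof of Proposition~\refp{1}, and then to extract \emph{two} forbidden coset indices from the single heavy coset. First I note that since $H$ is a maximal proper subgroup of the elementary abelian group $\Zn$, it is a hyperplane of index $5$, so $|H|=5^{n-1}=:m$ and $H$ has exactly five cosets. Fixing $e\in\Zn\stm H$ and writing $A_i:=(A-ie)\cap H$ for $i\in[0,4]$, we have $A=\bigcup_{i\in[0,4]}(ie+A_i)$, and the conclusion ``$A$ meets at most three cosets'' is precisely the statement that at most three of the sets $A_i$ are non-empty. Let $c\in[0,4]$ be the index of the heavy coset, so that $|A_c|>\frac12 m$.

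The engine of the argument is the elementary pigeonhole fact (already used in the proof of Lemma~\refl{union}) that if $X,Y\seq H$ satisfy $|X|+|Y|>|H|$, then $X+Y=H$; in particular $A_c+A_c=H$, because $2|A_c|>m$. I would first exploit self-doubling: since $A$ is sum-free, $(A_c+A_c)\cap A_{2c}=\est$, and as $A_c+A_c=H\supseteq A_{2c}$ this forces $A_{2c}=\est$. If $c=0$ this would read $A_0=\est$, contradicting $A_c=A_0\ne\est$; hence $c\ne0$, and therefore $2c\ne0$ as well.

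The key second step is to show that the subgroup itself is missed, i.e.\ $A_0=\est$. Suppose not. Sum-freeness applied to an element of the coset of $A_0$ (which lies in $H$) together with an element of the coset $ce+A_c$ produces an element of the coset $ce+H$ that must avoid $ce+A_c$; thus $(A_0+A_c)\cap A_c=\est$. But $A_0+A_c$ contains a translate of $A_c$, so $|A_0+A_c|\ge|A_c|>\frac12 m$, and together with $|A_c|>\frac12 m$ these two disjoint subsets of $H$ would overfill $H$, a contradiction. Hence $A_0=\est$.

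Combining the two steps, the indices $0$ and $2c$ are distinct (as $c\ne0$) and both lie outside the support $\{i\colon A_i\ne\est\}$, leaving at most three non-empty $A_i$, which is the claim. The point I expect to be the crux is the second step: with only one heavy coset one cannot force $X+Y=H$ for a generic pair of blocks, and the trick is that pairing $A_c$ with the index-$0$ part keeps the sum inside coset $c$, so it collides with $A_c$ \emph{itself} rather than with some lighter coset. Recognising that this yields a second forbidden index (namely the subgroup $H$), in addition to the ``doubling'' index $2c$, is exactly what makes $|\{i\colon A_i\ne\est\}|\le3$ fall out immediately.
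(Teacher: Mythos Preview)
Your proof is correct and follows essentially the same approach as the paper: set up the coset decomposition $A_i=(A-ie)\cap H$, use $A_c+A_c=H$ to kill the index $2c$, and then show the index $0$ is also empty, leaving at most three non-empty $A_i$. The only difference is in the second step: the paper normalises to $c=1$ and argues via the \emph{difference} set, observing that $(A_1-A_1)\cap A_0=\est$ and that $A_1-A_1=H$ (again by pigeonhole, since $|A_1|+|{-A_1}|>|H|$), which gives $A_0=\est$ in one line; your route through $(A_0+A_c)\cap A_c=\est$ and a cardinality overflow is equally valid but a touch less direct.
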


\begin{proof}
Fix an element $e\in\Zn\stm H$, and for each $i\in[0,4]$ set $A_i:=(A-ie)\cap
H$; thus, $A=A_0\cup(e+A_1)\cup\dotsb\cup(4e+A_4)$. Suppose that
$|A_i|>0.5|H|$ for some $i\in[0,4]$. Since $2A_i=H$ by the pigeonhole
principle, we have $i>0$ (as otherwise $2A_0=H$ would not be disjoint from
$A_0$). Normalizing, we can assume that $i=1$. From $2A_1\cap A_2=\est$ we
now derive $A_2=\est$, and from $(A_1-A_1)\cap A_0=\est$ we get $A_0=\est$.
\end{proof}

In view of Lemma~\refl{union} and Propositions~\refp{1} and~\refp{2}, we can
assume that the set $A\seq\Zn$ of Theorem~\reft{main} contains fewer than
$\frac12\cdot 5^{n-1}$ elements in every coset of every maximal proper
subgroup.

\begin{lemma}\label{l:new}
Let $n\ge 1$ be an integer, and suppose that $A,B,C\seq\Zn$ satisfy
$(A+B)\cap C=\est$. If $\min\{|A|,|B|\}>2\cdot5^{n-1}$ and $C\ne\est$, then
$|A|+|B|+2|C|\le6\cdot5^{n-1}$.
\end{lemma}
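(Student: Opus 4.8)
The plan is to feed the sumset $A+B$ into Kneser's theorem and combine the resulting lower bound on $|A+B|$ with the fact that $A+B$ is disjoint from the \emph{nonempty} set $C$, using the integrality (divisibility) constraints that the subgroup structure of $\Zn$ imposes on $|A+H|$.

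Since $|A|,|B|>0$, the sets $A,B$ are nonempty, so $A+B\ne\est$; put $H:=\pi(A+B)$. If $H=\Zn$, then $A+B$, being a nonempty translation-invariant subset of $\Zn$, is all of $\Zn$, whence $C=\est$ against the hypothesis; so $H$ is a proper subgroup, say $|H|=5^k$ with $0\le k\le n-1$. Now $A+H$ is a nonempty union of $H$-cosets, so $|A+H|$ is divisible by $5^k$; as also $|A+H|\ge|A|>2\cdot5^{n-1}$ and $2\cdot5^{n-1}$ is divisible by $5^k$ (because $k\le n-1$), we conclude $|A+H|\ge 2\cdot5^{n-1}+5^k$, and likewise $|B+H|\ge 2\cdot5^{n-1}+5^k$. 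Kneser's theorem gives $|A+B|\ge|A+H|+|B+H|-|H|$, while disjointness of $A+B$ and $C$ gives $|C|\le 5^n-|A+B|$; together with $|A|\le|A+H|$ and $|B|\le|B+H|$ this yields
$$ |A|+|B|+2|C| \le |A+H|+|B+H|+2\cdot5^n-2|A+H|-2|B+H|+2\cdot5^k = 2\cdot5^n+2\cdot5^k-|A+H|-|B+H|, $$
and substituting the two lower bounds collapses the right-hand side to $2\cdot5^n+2\cdot5^k-2(2\cdot5^{n-1}+5^k)=2\cdot5^n-4\cdot5^{n-1}=6\cdot5^{n-1}$, as required.

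The step that most needs care — and the nearest thing to an obstacle — is resisting a case analysis on the size of $H=\pi(A+B)$. Keeping $|H|=5^k$ as a free parameter is precisely what lets the $5^k$-terms cancel at the end, and the divisibility observation is what supplies the extra summand $5^k$ in the estimates for $|A+H|$ and $|B+H|$; without that gain the bound falls short by exactly the amount one needs. The only genuinely exceptional situation is $H=\Zn$, where the divisibility rounding is vacuous and one invokes $C\ne\est$ directly instead. (An alternative, essentially equivalent, route is to pass to the quotient $\Zn/H\cong\Zn[m]$ with $m=n-k\ge1$, where $\phi_H(A+B)$ is aperiodic, apply Kneser there in the form $|\bar A+\bar B|\ge|\bar A|+|\bar B|-1$, and clear denominators by multiplying through by $5^k$; this leads to the same inequality.)
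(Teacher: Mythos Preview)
Your proof is correct. The core device---applying Kneser and then using the divisibility of $|A+H|$ by $|H|$ to round the strict inequality $|A+H|>2\cdot5^{n-1}$ up to $|A+H|\ge 2\cdot5^{n-1}+|H|$---is exactly what the paper uses. The organizational difference is that the paper works with $H:=\pi(A+B-C)$ and applies Kneser to the three-fold sumset $A+B+(-C)$, obtaining separate bounds $|C|\le5^{n-1}-|H|$ and $|A|+|B|+|C|\le5^n+|H|$ which it then adds; you instead take $H:=\pi(A+B)$, apply two-set Kneser, and bound $|A|+|B|+2|C|$ in one line. Your route is a touch more direct (one Kneser application, one chain of inequalities), while the paper's version yields the intermediate estimate $|C|\le5^{n-1}-|H|$ as a byproduct; both arrive at the same cancellation of the $|H|$-terms.
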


\begin{proof}
Write $H:=\pi(A+B-C)$, and define $k\in[0,n]$ by $|H|=5^{n-k}$. We have
\begin{equation}\label{e:unnamed}
  \min\{|A+H|,|B+H|\} > 2\cdot 5^{n-1} = 2\cdot 5^{k-1}|H|
\end{equation}
while, by Kneser's theorem, and since $(A+B)\cap C=\est$ implies $0\notin
A+B-C$ and, consequently, $(A+B-C)\cap H=\est$,
\begin{equation}\label{e:ABCKneser}
  5^n-|H| \ge |A+B-C| \ge |A+H|+|B+H|+|C+H|-2|H|.
\end{equation}
Combining \refe{unnamed} and \refe{ABCKneser}, we obtain
  $$ 5^n \ge 2(2\cdot 5^{k-1}+1)|H| + |C+H| - |H|
                                    \ge 4\cdot 5^{k-1}|H| + |C+H| +|H|. $$
Consequently,
  $$ |C| \le |C+H| \le 5^{n-1}-|H|. $$
On the other hand, from~\refe{ABCKneser},
  $$ |A|+|B|+|C|\le 5^n+|H|. $$
Taking the sum of the last two estimates gives the result.
\end{proof}

\begin{proposition}\label{p:3}
Let $n\ge 1$ be an integer and suppose that $A\seq\Zn$ is a sum-free subset
of size $|A|>\frac32\cdot5^{n-1}$. If $H<\Zn$ is a maximal proper subgroup
such that every $H$-coset contains fewer than $\frac12|H|$ elements of $A$,
then there is at most one $H$-coset containing more than $\frac25|H|$
elements of $A$.
\end{proposition}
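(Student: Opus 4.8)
The plan is to argue by contradiction: I assume that $A$ meets two distinct $H$-cosets each in more than $\frac25|H|$ elements, and derive that $|A|\le\frac32\cdot5^{n-1}$. Since $H$ is a maximal proper subgroup, $|H|=5^{n-1}$. For $n\le2$ the conclusion is immediate (when $n=1$ the hypothesis forces $A=\est$; when $n=2$ every $H$-coset contains at most $2=\frac25|H|$ elements of $A$), so I assume $n\ge3$. Fix $e\in\Zn\stm H$ and write $A=A_0\cup(e+A_1)\cup(2e+A_2)\cup(3e+A_3)\cup(4e+A_4)$ with $A_i:=(A-ie)\cap H$, as in the proof of Proposition~\refp{1}. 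The hypothesis says $|A_i|<\frac12|H|$ for every $i$, the assumption $|A|>\frac32\cdot5^{n-1}$ reads $|A_0|\longp|A_4|>\frac32|H|$, and sum-freeness of $A$ gives $(A_i+A_j)\cap A_{i+j}=\est$ for all $i,j$ (indices modulo $5$); the two distinguished cosets correspond to indices $i\ne j$ with $|A_i|,|A_j|>\frac25|H|$. Since $n\ge3$ we have $H\simeq\Z_5^{n-1}$ and $\frac25|H|=2\cdot5^{n-2}$, so Lemma~\refl{new}, applied inside $H$, says: whenever $X,Y,Z\seq H$ satisfy $(X+Y)\cap Z=\est$, $\min\{|X|,|Y|\}>\frac25|H|$ and $Z\ne\est$, then $|X|+|Y|+2|Z|\le\frac65|H|$.

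First I would note that $A_0$ is a sum-free subset of $H$ (since $(A_0+A_0)\cap A_0=\est$), and that every sum-free $S\seq H$ satisfies $|S|\le\frac25|H|$ --- otherwise Lemma~\refl{new} applied to $(S,S,S)$ gives $4|S|\le\frac65|H|$, i.e.\ $|S|\le\frac{3}{10}|H|$, a contradiction. Hence $|A_0|\le\frac25|H|$, so the distinguished indices satisfy $i,j\in[1,4]$. The two-element subsets of $[1,4]$ split into two types: either one of $i,j$ is twice the other modulo $5$ (the pairs $\{1,2\},\{2,4\},\{1,3\},\{3,4\}$), or $i,j$ are opposite modulo $5$ (the pairs $\{1,4\},\{2,3\}$). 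In the first type, say $j\equiv2i\pmod5$, the relation $i+i\equiv j$ gives $(A_i+A_i)\cap A_j=\est$, and Lemma~\refl{new} applied in $H$ to $(A_i,A_i,A_j)$ --- legitimate since $|A_i|>\frac25|H|$ and $A_j\ne\est$ --- yields $2|A_i|+2|A_j|\le\frac65|H|$, i.e.\ $|A_i|+|A_j|\le\frac35|H|$, contradicting $|A_i|+|A_j|>\frac45|H|$. So the first type cannot occur.

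It remains to treat the opposite type, and, after replacing $e$ by $ie$ (which relabels the cosets $1,2,3,4$ by multiplication by $i^{-1}$ and fixes $A_0$), I may assume $\{i,j\}=\{1,4\}$, with $A_2,A_3$ the two remaining components. This is the \emph{main obstacle}, since now a single relation does not suffice: I would instead use all three of $(A_1+A_1)\cap A_2=\est$, $(A_4+A_4)\cap A_3=\est$ (here $8\equiv3\pmod5$), and $(A_1+A_4)\cap A_0=\est$, applying Lemma~\refl{new} in $H$ to $(A_1,A_1,A_2)$, to $(A_4,A_4,A_3)$, and to $(A_1,A_4,A_0)$ whenever the respective third set is non-empty. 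Using $|A_1|,|A_4|>\frac25|H|$, these give $|A_2|<\frac15|H|$, $|A_3|<\frac15|H|$ and $|A_0|<\frac15|H|$ (each holding trivially when the corresponding set is empty), and in addition $|A_1|+|A_2|\le\frac35|H|$ if $A_2\ne\est$ and $|A_3|+|A_4|\le\frac35|H|$ if $A_3\ne\est$. A short case check according to which of $A_0,A_2,A_3$ are empty then bounds $|A|$: if $A_2\ne\est$ and $A_3\ne\est$, then $|A|=|A_0|+(|A_1|+|A_2|)+(|A_3|+|A_4|)<\frac15|H|+\frac35|H|+\frac35|H|=\frac75|H|$; if, say, $A_2=\est$, then $|A|=|A_0|+|A_1|+|A_3|+|A_4|<\frac15|H|+\frac12|H|+\frac15|H|+\frac12|H|=\frac75|H|$ (the case $A_3=\est$ being symmetric under $e\mapsto-e$). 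In every case $|A|<\frac75\cdot5^{n-1}<\frac32\cdot5^{n-1}$, contradicting the hypothesis on $|A|$ and thereby finishing the proof. The only delicate point is this bookkeeping over vanishing components, forced by the requirement $Z\ne\est$ in Lemma~\refl{new}; every branch collapses to the same elementary estimate.
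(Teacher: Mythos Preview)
Your proof is correct and follows essentially the same approach as the paper's: derive a contradiction by applying Lemma~\refl{new} inside $H$ to the disjointness relations $(A_i+A_j)\cap A_{i+j}=\est$ among the coset slices. The organization differs only in minor ways --- you first rule out $A_0$ as rich (via the sum-free bound in $H$) and classify the remaining pairs in $\{1,2,3,4\}$ as ``twice'' or ``opposite'', whereas the paper normalizes directly to the three cases $\{0,1\},\{1,2\},\{1,4\}$ and in the last case simply sums the three Lemma~\refl{new} inequalities to get $2|A|+|A_1|+|A_4|\le18\cdot5^{n-2}$ rather than doing your case split on vanishing components; your explicit treatment of the hypothesis $C\ne\est$ in Lemma~\refl{new} is in fact more careful than the paper's presentation.
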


\begin{proof}
Suppose for a contradiction that there are two (or more) $H$-cosets that are
\emph{rich} meaning that they contain more than $\frac{2}{5}|H|$ elements of
$A$ each. Fix an element $e\in\Zn\stm H$ and write $A_i=(A-ie)\cap H$,
$i\in[0,4]$. Without loss of generality, either $A_0$ and $A_1$, or $A_1$ and
$A_2$, or $A_1$ and $A_4$ are rich.

If $A_0$ and $A_1$ are rich, then applying Lemma~\refl{new} with $H$ as the
underlying group, in view of $(A_0+A_1)\cap A_1=\est$ we get $4\cdot
\frac{2}{5}|H|<|A_0|+|A_1|+2|A_1|\le 6\cdot 5^{n-2}$, which is wrong.

If $A_1$ and $A_2$ are rich then, observing that $(A_1+A_1)\cap A_2=\est$, we
recover the contradictory
 $4\cdot \frac{2}{5}|H|<|A_1|+|A_1|+2|A_2|\le 6\cdot 5^{n-2}$.

Finally, if $A_1$ and $A_4$ are rich, then from
  $$ (A_1+A_4)\cap A_0 = (A_1+A_1)\cap A_2 = (A_4+A_4)\cap A_3 = \est $$
using Lemma~\refl{new} we obtain
\begin{align*}
  |A_1|+|A_4|+2|A_0| & \le 6\cdot 5^{n-2}, \\
  |A_1|+|A_1|+2|A_2| & \le 6\cdot 5^{n-2}, \\
  |A_4|+|A_4|+2|A_3| & \le 6\cdot 5^{n-2}.
\end{align*}
Taking the sum,
  $$ 3|A_1| + 3|A_4| + 2|A_0| + 2|A_2| + 2|A_3| \le 18\cdot 5^{n-2}; $$
that is, $2|A|+|A_1|+|A_4|\le 18\cdot 5^{n-2}$. However, from
$|A|>\frac32\cdot5^{n-1}$ and $\min\{|A_1|,|A_4|\}>\frac{2}{5}\cdot 5^{n-1}$
we derive $2|A|+|A_1|+|A_4|> 3\cdot 5^{n-1} + \frac45\cdot 5^{n-1}=19\cdot
5^{n-2}$, a contradiction.
\end{proof}

We use character sums to complete the argument and prove Theorem~\reft{main}.

\begin{proof}[Proof of Theorem~\reft{main}]
Suppose that $n\ge 2$, and that $A\seq\Zn$ is a sum-free set with
$\alp:=|A|/5^n>\frac3{10}$; we want to show that $A$ is contained in a union
of two cosets of a proper subgroup.

Denoting by $1_A$ the indicator function of $A$, consider the Fourier
coefficients
  $$ \hat1_A(\chi):=5^{-n}\sum_{a\in A} \chi(a),\ \chi\in\widehat{\Zn}. $$
Since $A$ is sum-free, we have $A\cap(A-A)=\est$, whence
  $$ \sum_\chi |\hat1_A(\chi)|^2\cdot \hat1_A(\chi) = 0; $$
consequently,
  $$ \sum_{\chi\ne 1}|\hat1_A(\chi)|^2\cdot \hat1_A(\chi) = -\alp^3 $$
and, as a result,
\begin{equation*}\label{e:Re}
  \sum_{\chi\ne 1}|\hat1_A(\chi)|^2\cdot \Re(\hat1_A(\chi)) = -\alp^3.
\end{equation*}
Comparing this to
  $$ \sum_{\chi\ne 1}|\hat1_A(\chi)|^2=\alp(1-\alp) $$
(which is an immediate corollary of the Parseval's identity), we obtain
  $$ \sum_{\chi\ne 1}|\hat1_A(\chi)|^2
                   \big((1-\alp)\,\Re(\hat1_A(\chi))+\alp^2\big) = 0. $$
We conclude that there exists a non-principal character
$\chi\in\widehat{\Zn}$ such that
\begin{equation}\label{e:Rsmall}
  \Re(\hat1_A(\chi)) \le -\frac{\alp^2}{1-\alp}.
\end{equation}

Let $F:=\ker\chi$, fix $e\in\Zn$ with $\chi(e)=\exp(2\pi i/5)$, and for each
$i\in[0,4]$, let $\alp_i:=|(A-ie)\cap F|/|F|$. By Propositions~\refp{1}
and~\refp{2},
% and in view of Example~\refx{11},
we can assume that $\max\{\alp_i\colon i\in[0,4]\}<0.5$, and then by
Proposition~\refp{3} we can further assume that there is at most one index
$i\in[0,4]$ with $\alp_i>0.4$; that is, of the five inequalities
 $\alp_i\le 0.4\ (i\in[0,4])$, at most one fails, but holds true once the
inequality is relaxed to $\alp_i<0.5$. We show that this set of assumptions
is inconsistent with~\refe{Rsmall}. To this end, we notice that
  $$ 5 \Re(\hat1_A(\chi)) = \alp_0+s_1\cos(2\pi/5)+s_2\cos(4\pi/5) $$
where $s_1:=\alp_1+\alp_4$ and $s_2:=\alp_2+\alp_3\le 0.9$. Comparing
with~\refe{Rsmall}, we get
\begin{align*}%\label{e:lopt1}
  -\frac{5\alp^2}{1-\alp}
    &\ge \alp_0+s_1\cos(2\pi/5)+s_2\cos(4\pi/5) \\
    &= \alp_0+s_1\cos(2\pi/5)+(s_2-0.9)\cos(4\pi/5) +0.9\cos(4\pi/5) \\
    &\ge \alp_0+s_1\cos(2\pi/5)+(s_2-0.9)\cos(2\pi/5) +0.9\cos(4\pi/5) \\
    &\ge \alp_0 + (5\alp-\alp_0-0.9)\cos(2\pi/5) +0.9\cos(4\pi/5) \\
    &\ge (5\alp-0.9)\cos(2\pi/5) +0.9\cos(4\pi/5),
\end{align*}
while the resulting inequality
  $$ -\frac{5\alp^2}{1-\alp} \ge (5\alp-0.9)\cos(2\pi/5) +0.9\cos(4\pi/5) $$
is easily seen to be wrong for all $\alp\in[0.3,1)$. This completes the proof
of Theorem~\reft{main}.
\end{proof}

\section*{Acknowledgment}
I am grateful to Leo Versteegen for the careful reading of the manuscript and
for spotting out a problem with the initial version of Example~\refx{Ex1}.

\vfill

\medskip


\begin{thebibliography}{GR05}
\bibitem[CP92]{b:cp}
  {\sc W. E.~Clark} and {\sc J.~Pedersen},
  Sum-free sets in vector spaces over GF(2),
  \emph{J. Combin. Theory Ser. A} {\bf 61} (2) (1992) 222--229.
\bibitem[DT89]{b:dt}
  {\sc A.~Davydov} and {\sc L.~Tombak},
  Quasi-perfect linear binary codes with distance $4$ and complete caps
    in projective geometry,
  \emph{Problemy Peredachi Informatzii} {\bf 25} (4) (1989), 11--23.
\bibitem[D68]{b:d}
   {\sc P. H.~Diananda},
   Critical subsets of finite abelian groups,
   \emph{J. London Math. Soc.} {\bf 43} (1968), 479--481.
\bibitem[DY69]{b:dy}
   {\sc P. H.~Diananda} and {\sc H. P.~Yap},
    Maximal sum-free sets of elements of finite groups,
    \emph{Proc. Japan Acad.} {\bf 45} (1969), 1--5.
\bibitem[G13]{b:g}
   {\sc D. J.~Grynkiewicz},
   Structural additive theory,
   \emph{Developments in Mathematics} {\bf 30}.
     Springer, Cham, 2013. xii+426 pp.
\bibitem[GR05]{b:gr}
   {\sc B.~Green} and {\sc I.~Ruzsa},
   Sum-free sets in abelian groups,
   Israel J. Math. {\bf 147} (2005), 157--188.
\bibitem[K69]{b:ki}
   {\sc J. B.~Kim},
   Mutants in semigroups,
   \emph{Czechoslovak Math. J.} {\bf 19} (94) (1969), 86--90.
\bibitem[KL03]{b:kl}
   {\sc B. Klopsch} and {\sc V.~Lev},
   How long does it take to generate a group?,
   \emph{Journal of Algebra} {\bf 261} (2003), 145--171.
\bibitem[K53]{b:kn1}
  {\sc M.~Kneser},
  Absch\"atzung der asymptotischen Dichte von Summenmengen,
  \emph{Math. Z.} {\bf 58} (1953), 459--484.
\bibitem[K55]{b:kn2}
  \bysame,
  Ein Satz \"uber abelsche Gruppen mit Anwendungen auf die
  Geometrie der Zahlen,
  \emph{Math. Z.} {\bf 61} (1955), 429--434.
\bibitem[L05]{b:l1}
  {\sc V.~Lev},
  Large  sum-free  sets  in  ternary  spaces,
  \emph{J.  Combin.  Theory  Ser.  A}, {\bf 111}, (2005), 337--346.
\bibitem[L18]{b:l2}
  \bysame,
  Stability result for sets with $3A\ne\Zn$,
  \emph{J. Combin. Theory Ser. A} {\bf 157} (2018), 334--348.
\bibitem[S16]{b:s}
  {\sc I. Schur},
  On the congruence $x^m+y^m\equiv z^m\pmod p$ (German),
  \emph{Jahresber. Dtsch. Math.-Ver.} {\bf 25} (1916), 114--117.
\bibitem[TV17]{b:tv}
  {\sc T.~Tao} and {\sc V.~Vu},
  Sum-free sets in groups: a survey,
  \emph{J. Comb.} {\bf 8} (3) (2017), 541--552.
\end{thebibliography}
\end{document}